\newtheorem{maintheorem}{Theorem}
\newtheorem{theorem}{Theorem}[section] 
\newtheorem{lemma}[theorem]{Lemma}     
\newtheorem{corollary}[theorem]{Corollary}
\newtheorem{proposition}[theorem]{Proposition}
\newtheorem{remark}{Remark}
\newtheorem{example}{Example}
\newcommand{\Vol}{\operatorname{Vol}}
\newcommand{\Id}{\operatorname{Id}}
\newcommand{\esssup}{\operatorname{ess\,sup}}
\newcommand{\set}[1]{\{#1\}}
\begin{document}

\title[Representation of measures]
 {On the regular representation of measures} 

\author[J. Jost]{J\"{u}rgen Jost}
\address{J\"{u}rgen Jost\\
   Max-Planck-Institute for Mathematics in the Sciences\\
   Inselstr. 22\\
   04103 Leipzig\\
   Germany}
   \email{jost@mis.mpg.de}

\author[R. Matveev]{Rostislav Matveev}
\address{Rostislav Matveev\\
   Max-Planck-Institute for Mathematics in the Sciences\\
   Inselstr. 22\\
   04103 Leipzig\\
   Germany}
   \email{matveev@mis.mpg.de}
   
\author[J.W. Portegies]{Jacobus W.~Portegies}
\address{Jacobus W.~Portegies\\
   Max-Planck-Institute for Mathematics in the Sciences\\
   Inselstr. 22\\
   04103 Leipzig\\
   Germany}
   \email{jacobus.portegies@mis.mpg.de}

\author[C.S. Rodrigues]{Christian S.~Rodrigues}
\address{Christian S.~Rodrigues\\
Institute of Mathematics\\
Universidade Estadual de Campinas\\
   13.083-859 Campinas - SP\\
   Brazil}
   \email{rodrigues@ime.unicamp.br}

\date{\today}

\begin{abstract}
We give sufficient conditions for a parametrised family of probability measures on a Riemannian manifold with boundary to be represented by random maps of class $C^k$. 
The conditions allow for the probability densities to approach zero towards the boundary of the manifold. 
We also formulate two obstructions to regular representability.
\end{abstract}

\keywords{Markov chain, random dynamics, random maps, representation of measures}
\subjclass{Primary:  37C40, 49K45, 49N60, Secondary: 37H10, 37C05}

\maketitle

\section{Introduction}

A representation of a family of probability measures $\{\mu_x\}_{x \in X}$ on a Riemannian manifold $M$, parametrised by a Riemannian manifold $X$, is a mapping $T:
X \times \Omega \to M$, such that for each
$x \in X$, it holds that
\begin{equation}
\label{eq.repres}
\mu_{x} = T(x,\cdot)_{*}\mathbb{P},
\end{equation}
where $(\Omega, \mathbb{P})$ is an auxiliary probability space, and
$T(x,\cdot)_{*}\mathbb{P}$ denotes the push forward of $\mathbb{P}$ by
$T(x,\cdot)$. 

In this paper we will investigate under which assumptions we can guarantee the existence of a representation such that the maps $T(\cdot, \omega)$ are $C^k$-regular, uniformly in $\omega$.
In this case, we will say that the family $\{\mu_x\}_{x \in X}$ is boundedly $C^k$-representable.

This question appears in several contexts in Ergodic Theory~\cite{Kif86,Kif88,RoV16}. In particular, it is relevant in the study of stochastic stability of dynamical systems. 
Roughly speaking, stochastic stability refers to the stability of asymptotic states of a dynamical system under small random perturbations.

Given a discrete-time dynamics generated by a map $f: M \to M$, a natural way to introduce random perturbations is by the Markov chain model, that is to replace the map $f$ by a Markov kernel $m \mapsto \mu_m^\varepsilon$, in that the image of a point $m$ under iteration of the system is distributed according to the distribution $\mu_m^\varepsilon$. 
The distribution $\mu_m^\varepsilon$ is usually assumed to be localised near $f(m)$, and $\varepsilon$ is thought of as the scale of the localisation.

To show that a given dynamics is stochastically stable, it is for technical reasons often more convenient to introduce random perturbations by a model of random maps  \cite{Kif86,Kif88,You86,Ara00,AlA03,BeV06,AAV07}. 
In the random maps model, in each iteration of the dynamical system, the map is chosen at random according to a certain distribution. 

Arguably, the Markov chain model is more natural than the random maps model. 
This raises the question under which conditions a Markov chain model can be realised by a model of random maps. 
In addition, regularity properties of the random maps are essential for proving statements about stochastic stability. 
Therefore, it is important to investigate the relationship between regularity of the Markov kernels and the regularity of the random maps.

In fact, a realisation of a Markov chain model by a model of uniformly $C^k$ random maps, exactly corresponds to a bounded $C^k$  representation of the Markov kernels $\mu_m^\varepsilon$ (in the special case where the parametrising manifold $X$ is $M$ itself). To see this, note that when the family is boundedly $C^k$-representable, the map $L: \Omega \to C^k(X, M)$ given by
\[
L(\omega) (\cdot) := T(\cdot, \omega),
\]
induces a probability distribution $L_{*} \mathbb{P}$ on $C^k(X, M)$. 
If a map $f$ is drawn at random from $L_{*} \mathbb{P}$, the value of $f(x)$ will be distributed according to $\mu_x^\varepsilon$.

\subsection{Main result}
As the main result of 
this paper, we show that a family $\{\mu_{x}\}_{x\in X}$ is boundedly $C^k$-representable ($k \in \mathbb{N}$) in case $M$ is a compact, oriented,  and connected Riemannian manifold of class
$C^{k+2}$ with or without boundary $\partial M$, $X$ is a compact Riemannian manifold of class $C^k$, and the measures $\{\mu_x\}_{x \in
  X}$ are absolutely constinuous with respect to the volume measure on $X \times M$, with densities $\rho(\cdot, \cdot)$ of class $C^k$ that are positive on the interior of $M$ and have suitable decay towards the boundary $\partial M$.
The assumptions on the decay will be phrased in Section \ref{suse:assumptions}, and the result will be formally stated as Theorem \ref{th:CkRepresentability}. 

With respect to earlier work, which we will briefly review in the next section, the additional difficulties here are that the densities of the measures $\mu_x$ are not assumed to be uniformly bounded away from zero, and that there are neither convexity assumptions on the support of the measures nor curvature assumptions on the manifold $M$. 

We illustrate by several examples presented in Section \ref{se:Examples} though that some assumptions on the decay towards the boundary are then necessary. We phrase a more general obstruction to bounded Lipschitz representability in Theorem \ref{th:obstructionRepr}, and an obstruction to bounded $C^k$ representability in Proposition \ref{pr:ObstructionTesting}.

The main result can also be used as an ingredient to prove representability in situations where the supports of the measures $\mu_x$ are varying and are not necessarily equal to the full manifold $M$. 
We formulate a simple approach to representation in this case in Corollary~\ref{co:MovingMeasures}.

\subsection{Previous work on representation of measures}
Blumenthal and Corson seem to be the first to address the problem of representating a family of Markov kernels by random maps~\cite{BlC70, BlC72}.  
They showed that a weak-* continuous family of probability measures can be represented by \emph{continuous} random maps.
After that, Kifer~\cite[Theorem 1.1]{Kif86} gave conditions on a family of probability measures that guarantee the existence of a \emph{measurable} representation. Later,
Quas~\cite{Qua91} implicitly used the so-called Moser coupling \cite{Mos65} 
to show that a family of probability measures on a smooth, closed, orientable Riemannian manifold, with smooth densities that smoothly depend on the parameter,  can be represented by
\emph{smooth} random maps. 
Ara\'{u}jo~\cite[Examples 1, and
2]{Ara00} and Benedicks and Viana~\cite{BeV06} and~\cite[D.4]{BDV05},
also provided some examples of representation for restricted classes
of maps and probability families; see also~\cite{JKR15} for a more elaborate historical overview.

Kell and the first and last author studied the 
representation of Markov chains by random maps focusing on how the regularity of the maps depend on the regularity of the probability measures~\cite{JKR15}. 
They showed using techniques from optimal transport that $C^k$ transition kernels can be represented by $C^k$ random maps, under assumptions about the positivity of the probability measures on the support, and on the convexity of the support. 
Without these assumptions, maps produced by optimal transport do not seem to have enough regularity, and therefore in this paper, we follow a different approach.

Most of the above techniques for showing regular representation work according to the following scheme. 
Find a regular family of transport maps $T(x, \cdot):M \to M$ that push a reference measure $\mu$ to the measure $\mu_x$, and show regular dependence of the map $T(x, \cdot)$ on $x$. 
The map $T:X\times M \to M$ is then a representation, for which $T(\cdot, m)$ is regular.

This scheme actually yields more regularity than strictly necessary: for $C^k$-representability the requirement is that for fixed $m \in M$, the maps $T(\cdot, m)$ are regular, and the regularity in $m$ is not important. 
We leverage this observation to treat the transport in the interior of the manifold, where the densities are positive, by standard Moser coupling, and to transport the mass near the boundary of the manifold by a radial, monotone rearrangement that we can analyse carefully.

\section{Main representability results}
\label{sec.not}

Let $M$ be a compact, oriented, connected manifold of
class $C^{k+2}$ with or without boundary $\partial M$.  Let $\bar{\mu}
= \{\mu_x\}_{x \in X}$ be a family of probability measures,
parametrised by a manifold $X$ of class $C^k$.
The representation of the family $\bar{\mu}$ can be thought of as a
measurable map
\begin{equation}\label{e:F}
T:X\times \Omega\to M,
\end{equation}
such that for every $x \in X$,
\[
T(x, \cdot )_{*} \mathbb{P}=\mu_{x},
\]
where $\Omega=(\Omega,\mathbb{P})$ is a Rochlin-Lebesgue
probability space and there exists a constant $C > 0$, such that, for
every $\omega \in \Omega$, the $C^k$-seminorm of the map $x \mapsto T(x,
\omega)$ is bounded by $C$.

We may choose a Riemannian metric $g$ on $M$, such that its
coefficients written out in coordinates are of class $C^{k+1}$. We
denote by $\rho(x, \cdot)$ the density of $\mu_x$ with respect to the standard
Riemannian volume measure $\Vol$.  We will assume that $\rho \in C^k(X
\times M)$, and that $\rho > 0$ on the interior $\mathring{M}$ of $M$.

\begin{remark}\label{rmk}
  We note that by the compactness of $X$ and $M$, there exists a
  reference density $f \in C^{k+2}(M)$, $f > 0$ on the interior of $M$
  such that for all $x \in X$ and $m \in M$,
\[
\rho(x,m) > f(m).
\]
\end{remark}

\subsection{Description of the representation}

The probability space $\Omega$ will be concrete: it will be
either $M$ itself or a subset of it. We will also explicitly construct
$T$ as a map from $X \times M$ to $M$.  We distinguish two cases,
based on whether there is a positive uniform lower bound on the
densities or not.

In the special case where such a positive uniform lower bound exists,
we will construct a representation by showing regular dependence on
parameters in the standard Moser coupling.  This will also be an
ingredient in the proof of the more general case.  The precise
statement is formulated in the following proposition.

\begin{proposition}\label{p:moser1}
  Suppose $M$ is an oriented, compact, and connected Riemannian
  manifold (with or without boundary) of class $C^{k+2}$, with $k\geq
  1$, and the family $\bar{\mu}=\set{\mu_{x}}_{x\in X}$ is such that
  \begin{itemize}
  \item
    $\rho(\cdot,\cdot)\in C^{k}(X\times M)$
  \item
    $\rho(x,m)\geq c$ for some $c>0$ and every $(x,m)\in X\times M$.
  \end{itemize}
  Let $\mu = \rho_0 \text{d} \Vol_M$ with $\rho_0 \in C^k(M)$ with
  $\rho_0 \geq c$.  Then, there exists a map $F \in C^k(X\times M, M)$,
  such that, for every $x \in X$, the map $F(x, \cdot): M \to M$ is a diffeomorphism and
\[
F(x, \cdot)_{*} \mu = \mu_{x}.
\]
In particular, the family $\bar{\mu}$ is boundedly
$C^{k}$-representable.
\end{proposition}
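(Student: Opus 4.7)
The plan is to carry out Moser's coupling construction with explicit $C^k$-dependence on the parameter $x\in X$. Introduce the interpolating family of densities
\[
\rho_{t,x}(m) := (1-t)\rho_0(m) + t\,\rho(x,m), \qquad (t,x,m) \in [0,1] \times X \times M,
\]
which is bounded below by $c$, integrates to one over $M$ for each $(t,x)$, and is of class $C^k$ jointly in $(x,m)$. The aim is to produce a time-dependent vector field $V_{t,x}$ on $M$, tangent to $\partial M$, satisfying the continuity equation
\[
\partial_t \rho_{t,x} + \operatorname{div}(\rho_{t,x} V_{t,x}) = 0,
\]
and then to set $F(x,\cdot):=\Phi^{x}_{0,1}$, the time-$1$ map of the corresponding flow.

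To build $V_{t,x}$, note that $\partial_t \rho_{t,x} = \rho(x,\cdot)-\rho_0$ has zero mean on $M$, so the Neumann problem
\[
\Delta_g u_x = \rho_0 - \rho(x,\cdot) \quad\text{on } M, \qquad \partial_\nu u_x = 0 \quad\text{on } \partial M
\]
admits a unique solution $u_x$ once normalised by $\int_M u_x\,\text{d}\Vol_M = 0$ (the boundary condition is simply dropped in the closed case). Setting $V_{t,x}:=\nabla u_x/\rho_{t,x}$, which is well defined since $\rho_{t,x}\geq c$, one has $\rho_{t,x}V_{t,x}=\nabla u_x$, so $\operatorname{div}(\rho_{t,x}V_{t,x})=\Delta_g u_x = \rho_0-\rho(x,\cdot)$ realises the continuity equation, and the Neumann condition forces $\nabla u_x$ (and hence $V_{t,x}$) to be tangent to $\partial M$. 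Since $\Delta_g$ does not depend on $x$, differentiating the equation in $x$ shows that each $\partial_x^{\alpha}u_x$ solves the analogous Neumann problem with right-hand side $-\partial_x^{\alpha}\rho(x,\cdot)$; Schauder estimates on the $C^{k+2}$ manifold $M$ then yield $u_x\in C^{k+2}(M)$ with $C^{k+2}$-norm bounded uniformly in $x$, and $x\mapsto u_x$ of class $C^k$ into $C^{k+2}(M)$.

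The flow $\Phi^{x}_{s,t}$ of $V_{\cdot,x}$ therefore exists on $[0,1]^2\times M$ and consists of diffeomorphisms of $M$, invariance of $M$ being granted by the tangency of $V_{t,x}$ to $\partial M$. The classical ODE theorem on smooth dependence on parameters delivers $\Phi^{x}_{0,1}\in C^k(X\times M,M)$. To check $F(x,\cdot)_*\mu = \mu_x$ it suffices to show $(\Phi^{x}_{0,t})_*(\rho_0\,\text{d}\Vol_M)=\rho_{t,x}\,\text{d}\Vol_M$ for all $t\in[0,1]$: both sides solve the same linear continuity equation with drift $V_{\cdot,x}$ and agree at $t=0$, so uniqueness gives equality, and setting $t=1$ recovers $\mu_x$. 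The uniform bound on the $C^k$-seminorm of $x\mapsto F(x,m)$, independent of $m\in M$, follows from compactness of $X$ and $M$ and the uniform $C^k$-bounds already obtained for $V_{t,x}$.

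The main technical point is the propagation of $C^k$-regularity in $x$ from the data $\rho(\cdot,\cdot)$ to the Neumann solution $u_x$. Once this parameter-dependent elliptic regularity is in place — either by the inductive differentiation argument just sketched, or by viewing the Neumann inverse as a bounded linear map between H\"older (or Sobolev) spaces and commuting it with $x$-differentiation — the remainder of the proof reduces to a routine application of the smooth-dependence-on-parameters theorem for ODEs together with the standard Moser verification that the resulting flow transports $\mu$ to $\mu_x$.
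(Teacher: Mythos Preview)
Your argument is essentially the paper's proof: both set up the linear interpolation of densities, solve the Neumann problem $\Delta u_x=\rho_0-\rho(x,\cdot)$, take $V_{t,x}=\nabla u_x/\rho_{t,x}$, integrate the flow, and verify the Moser identity; the paper merely phrases the first steps in the language of forms before reducing to the same PDE. One technical point deserves correction: since $\rho(x,\cdot)$ is only assumed $C^k$ (not $C^{k,\alpha}$), Schauder estimates do \emph{not} yield $u_x\in C^{k+2}(M)$ as you claim, and this is exactly why the paper instead uses the $W^{2,p}$ estimate together with Morrey's inequality to obtain $u_x\in C^{1,\sigma}$ from $L^p$ data, and then bootstraps by differentiating the equation in both the $x$- and $m$-directions (choosing a product metric near $\partial M$ so that the differentiated problems retain homogeneous Neumann conditions). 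Your parenthetical mention of Sobolev spaces is the right fix; with that adjustment the proofs coincide.
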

The proof of this proposition is presented in
Section~\ref{se:withmaintheorem}.

In case there is no uniform lower bound for the densities on
$M$, we need to make some assumptions on how the
densities decay towards the boundary.

\subsection{Assumptions on the decay of the densities}
\label{suse:assumptions}

Note that we may choose collar coordinates in a neighbourhood $U$ of
the boundary of $M$.
That is, there exists a $C^{k+2}$ parametrisation $Q : \partial M \times
[0,1] \to M$, which is a diffeomorphism onto its image, and restricts
to the identity on $\partial M\times \{0\}$.  For a point $(a,t)
\in \partial M \times [0,1]$, we will occasionally refer to $a$ as
the boundary coordinate, and to $t$ as the collar coordinate.
Moreover, we will at times abuse notation by for instance writing
$G(a,t)$ instead of $G(Q(a,t))$ for a function $G: M \to \mathbb{R}$.

As noted in Remark~\ref{rmk}, the compactness of $X$ and $M$ implies
the existence of a reference density $f \in C^{k+2}(M)$. In fact, by compactness, $f$
can be chosen in such a way that in the collar, $f$ is independent of
the boundary coordinate.  We define the reference measure $\mu := f
\text{d}\Vol_M$.

We assume that $\rho \in C^k(X \times M)$ and $\rho > 0$ on the
interior $\mathring{M}$ of $M$.  In order for any representability
result to hold, additional assumptions on the decay of the densities
are necessary.

We assume that there exist functions $E:\partial M \times [0,1]\to
[1,\infty)$ and $B:\partial M \times [0,1]\to [1,\infty)$ such that
for every multi-index $\beta$ and every index $j \in \mathbb{N}_0$
with $|\beta| + j \leq k$, and every $a \in \partial M$, it holds that
\begin{equation}
\label{eq:DerivativeAssumption}
\frac{1}{\rho(x,a,t)} \left| D_x^{\beta} D_t^{j}  \rho(x,a,t) \right| \leq E(a,t)^{|\beta|} B(a,t)^{j}
\end{equation}
and
\begin{equation}
\label{eq:IntegratedAssumption}
\frac{1}{\rho(x,a,t)} \int_0^t\left|D_x^{\beta} \rho(x,a,s) \right| \text{d} s \leq \frac{E(a,t)^{|\beta|}}{B(a,t)},
\end{equation}
and 
\begin{equation}
\label{eq:ClosureAssumption}
E(a,t)^k \leq A B(a,t)
\end{equation}
for some constant $A > 0$.

We will illustrate these
assumptions by some examples  in Section \ref{se:Examples}.

\subsection{Main Theorem}
\label{suse:maintheorem}
The following theorem states our main representability result.

\begin{maintheorem}\label{th:CkRepresentability}
  Let $M$ be a compact, oriented, connected manifold
  of class $C^{k+2}$, for some $k\in \mathbb{N}$, with boundary $\partial M$.  Let
  $\{\mu_x\}_{x \in X}$ be a family of probability measures
  parametrised by a manifold $X$ of class $C^k$, and suppose the
  assumptions in Section~\ref{suse:assumptions} hold.  Then, there
  exists a family of piecewise $C^k$-maps $T_x: M \to M$, such that
  $(T_x )_{*} \mu = \mu_x$, and the $C^k$-norm of $x \mapsto T_x(m)$
  is uniformly bounded in $m \in M$.  In particular, the family
  $\{\mu_x\}_{x\in X}$ is boundedly $C^k$-representable.
\end{maintheorem}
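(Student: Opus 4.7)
The strategy is to combine two transport constructions, leveraging the observation (emphasized in the introduction) that $T_x$ only needs to be $C^k$ as a function of $x$, not of $m$, so the global map may be piecewise in $m$. In collar coordinates $(a,t) \in \partial M \times [0,1]$ on $U = Q(\partial M \times [0,1])$, with reference density $f(t)$ (which by Remark \ref{rmk} can be chosen independent of $a$) and collar Jacobian $J(a,t)$, I would handle the vanishing densities near $\partial M$ by a fiberwise monotone rearrangement along each curve $\{a\} \times [0,1]$. In the interior region $V = M \setminus U$, where both $f$ and $\rho(x,\cdot)$ admit uniform positive lower bounds, Proposition~\ref{p:moser1} immediately yields a $C^k$-family of diffeomorphisms via Moser coupling.

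Concretely, in the collar I define $\tau_x(a,t) \in [0,1]$ implicitly by the CDF inversion
\[
\int_0^{\tau_x(a,t)} \rho(x,a,s)\,J(a,s)\,ds \;=\; c_x(a)\int_0^{t} f(s)\,J(a,s)\,ds,
\]
where $c_x(a) = \int_0^1 \rho\,J\,ds / \int_0^1 f\,J\,ds$ is chosen so that $\tau_x(a,1) = 1$. Then $(a,t) \mapsto (a,\tau_x(a,t))$ pushes the measure $c_x(a)\,f(t)\,J(a,t)\,da\,dt$ onto $\mu_x|_U$. The prefactor $c_x(a)$ and the global mass mismatch between $\mu(U)$ and $\mu_x(U)$ are reconciled by a Moser-type adjustment of the $\partial M$-marginal (another application of Proposition~\ref{p:moser1} on $\partial M$, whose marginals inherit a positive lower bound from $\rho > 0$ on $\mathring M$) together with a piecewise gluing to the Moser transport on $V$; the piecewise freedom in the conclusion absorbs the mass exchange across the interface $\partial U \cap \mathring M$.

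The main obstacle is bounding the $C^k$-norm of $x \mapsto \tau_x(a,t)$ \emph{uniformly} in $(a,t)$, in particular as $t \to 0$. Differentiating the defining relation implicitly and applying Faà di Bruno expresses $D^\beta_x \tau_x(a,t)$ as a polynomial combination of quantities of the form
\[
\frac{D^\gamma_x \rho(x,a,\tau_x)}{\rho(x,a,\tau_x)} \qquad \text{and} \qquad \frac{1}{\rho(x,a,\tau_x)}\int_0^{\tau_x} D^\gamma_x \rho(x,a,s)\,ds,
\]
each of which a priori threatens to blow up on $\partial M$. The hypotheses \eqref{eq:DerivativeAssumption} and \eqref{eq:IntegratedAssumption} are engineered precisely to dominate these quantities by the auxiliary functions $E(a,t)$ and $B(a,t)$, while the closure condition \eqref{eq:ClosureAssumption}, $E^k \leq A\,B$, ensures that the nested products arising through $k$ iterations of implicit differentiation collapse to a uniform bound. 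Carrying out this multi-index combinatorial estimate, verifying that $c_x(a)$ and the marginal-matching diffeomorphism on $\partial M$ inherit the required regularity, and checking that the piecewise gluing is compatible with the bounds, constitutes the principal labor of the proof.
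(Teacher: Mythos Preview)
Your identification of the core analytical ingredient is correct and matches the paper: the monotone rearrangement in the collar coordinate, with the uniform $C^k$-in-$x$ bounds obtained by implicit differentiation and the Fa\`a di Bruno bookkeeping controlled by assumptions \eqref{eq:DerivativeAssumption}--\eqref{eq:ClosureAssumption}. This is exactly the content of Lemma~\ref{le:RegBoundary}.

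The gap is in your gluing scheme. You propose to transport $\mu|_U$ to $\mu_x|_U$ on the collar and, separately, $\mu|_V$ to $\mu_x|_V$ on the interior by Moser, then piece the two together. But in general $\mu(U)\neq \mu_x(U)$, so no pair of maps $U\to U$, $V\to V$ can do this; the ``piecewise freedom'' in the conclusion does not absorb a genuine mass defect across the interface. Your proposed fixes (the normalizing factor $c_x(a)$ and a Moser adjustment of the $\partial M$-marginal) address the fiberwise mass variation in $a$ but not the global discrepancy $\mu(U)-\mu_x(U)$, and the fiberwise lift $(a,t)\mapsto(\phi_x(a),t)$ does not push $fJ$ to $c_x(a)fJ$ unless $J$ factors. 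The paper sidesteps all of this by a different decomposition: it does \emph{not} normalize so that $\tau_x(a,1)=1$, but instead uses $f<\rho$ to get $g_x(a,t)<t$, then interpolates $g_x$ to the identity by a cutoff to obtain a \emph{global} diffeomorphism $G_x:M\to M$ (Proposition~\ref{pr:DiffCollar}). This $G_x$ does not push $\mu$ to $\mu_x$ anywhere; rather $(G_x^{-1})_*\mu_x$ agrees with $\mu$ on a collar and has density bounded away from zero elsewhere. A single application of Proposition~\ref{p:moser1} to $(G_x^{-1})_*\mu_x$ on the complement of a small collar (extended by the identity on the collar) then gives $F_x$, and $T_x=G_x\circ F_x$ works automatically with no interface accounting.
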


\subsection*{On the proof of the Main Theorem}

Whereas the representation $F$ in Proposition \ref{p:moser1} is
actually regular from $X \times M$ to $M$, when there is no lower
bound on the densities it will be important to exploit that bounded
representability only requires the regularity with respect to $X$, but
not with respect to $M$.

We will now describe the construction of the representing map $T$ for
the general case. If we would try to repeat the same argument
involving the Moser coupling as in the proof of Proposition
\ref{p:moser1}, the blow-up of derivatives with respect to the point
in $M$ also prohibits the control of the derivatives with respect to
the parameter $x$.

Therefore, we introduce a pre-processing step in the form of
Proposition \ref{pr:DiffCollar}.  Due to its length, we chose to only
list the proposition later, but the important conclusion is that we
may construct homeomorphisms $G_x$ from $M$ to itself, which are
diffeomorphisms on the interior of $M$, and which uniformise the behavior
of the measures $\mu_x$ near the boundary. More precisely,
$(G_x^{-1})_{*} \mu_x$ equals $\mu$ when restricted to a certain
neighbourhood $V$ of the boundary of $M$.  Moreover, the $C^k$-norm of
the maps $x \mapsto G_x(m)$ are bounded, uniformly in $m$.
 
We will see that in the complement of this neighbourhood $V$, even
though the measures $(G_x^{-1})_{*} \mu_x$ differ from $\mu$, their
densities are regular and uniformly bounded away from zero.  Hence, we
have reduced the problem to the special case and we can use Proposition
\ref{p:moser1} to obtain a representation $F$ of the family
$(G_x^{-1})_{*} \mu_x$ restricted to the complement of $V$.  We extend
$F$ to the identity on $V$.  Note that $F$ is only piecewise regular.
Now we define for every $x \in X$, the map $T_x$ by $T_x = G_x \circ
F_x$ and check that indeed
\[
(T_x )_{*} \mu = (G_x \circ F_x)_{*} \mu = (G_x)_{*} (F_x)_{*} \mu =
(G_x)_{*} (G_x^{-1})_{*} \mu_x = \mu_x.
\]


Below, we formulate an easy corollary that shows bounded
representability when the support $M_x$ of the measures $\mu_x$ is not
the full manifold, but rather an $n$-dimensional submanifold for which the boundary is of class $C^{k}$.  We assume that
$\mu_x$ is absolutely continuous with respect to the Riemannian volume
measure on $M$, and denote the density by $\rho(x, \cdot)$.  We assume
$\rho$ is positive on the interior of its support $\mathring{M}_x$. 
We also assume the existence of a compact, oriented, connected, Riemannian manifold $N$ of class $C^{k+2}$ with boundary of class $C^{k+1}$ such that $N$ is diffeomorphic to $M_x$ for all $x \in X$.

\begin{corollary}\label{co:MovingMeasures}
Let $N$ be a compact, oriented, connected Riemannian manifold of class $C^{k+2}$ with boundary of class $C^{k+1}$. 
Suppose there exists a family of $C^{k}$-diffeomorphisms $R_x: N \to
  M_{x}$, such that the map $R$ is of class $C^k(X
  \times N, M)$.  Let the family $(R_x^{-1})_{*} \mu_x$ satisfy
  the hypotheses of Theorem \ref{th:CkRepresentability}.  Then the family $\{\mu_x\}_{x \in X}$ is boundedly $C^k$-representable.
\end{corollary}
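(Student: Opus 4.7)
The plan is to reduce directly to Theorem \ref{th:CkRepresentability} by pulling the problem back to the fixed model manifold $N$. Since $R_x:N\to M_x$ is a $C^k$-diffeomorphism for each $x \in X$, the family of pushforward measures $\tilde{\mu}_x := (R_x^{-1})_{*}\mu_x$ is well-defined on $N$, and by hypothesis satisfies the assumptions of the Main Theorem. Applying Theorem~\ref{th:CkRepresentability} on $N$, we obtain a reference measure $\mu$ on $N$ and a family of piecewise $C^k$ maps $S_x:N\to N$ such that
\[
(S_x)_{*}\mu = \tilde{\mu}_x,
\]
with the $C^k$-norm of $x\mapsto S_x(n)$ bounded uniformly in $n \in N$.

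Next, I would define the candidate representation by post-composition: set $T_x := R_x\circ S_x : N\to M_x\subseteq M$. The pushforward identity is then a direct functorial computation,
\[
(T_x)_{*}\mu = (R_x)_{*}(S_x)_{*}\mu = (R_x)_{*}\tilde{\mu}_x = (R_x)_{*}(R_x^{-1})_{*}\mu_x = \mu_x,
\]
so $T$ represents the family on the probability space $(N,\mu)$ (after normalising $\mu$ if necessary; since $\mu(N)=1$ follows from $\tilde{\mu}_x$ being a probability measure, this is automatic).

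The remaining point is the uniform $C^k$-bound for the map $x\mapsto T_x(n)=R(x,S(x,n))$, with the bound independent of $n$. Here I would expand the $j$-th derivative in $x$ by the Faà di Bruno / chain rule: it is a polynomial expression in the partial derivatives $\partial_x^{\alpha}\partial_n^{\gamma} R$ evaluated at $(x,S(x,n))$ with $|\alpha|+|\gamma|\leq j$, and in the derivatives $\partial_x^{\beta}S(x,n)$ with $|\beta|\leq j$. By the hypothesis $R\in C^k(X\times N,M)$ and compactness of $X\times N$, all the factors coming from $R$ are uniformly bounded in $(x,n)$. The factors coming from $S$ are uniformly bounded in $n$ by the conclusion of Theorem~\ref{th:CkRepresentability}. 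Combining these two uniform bounds yields a uniform $C^k$-bound on $x\mapsto T_x(n)$, which is exactly bounded $C^k$-representability.

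The only subtlety worth flagging, and the one place where care is needed, is that the maps $S_x$ delivered by Theorem~\ref{th:CkRepresentability} are only \emph{piecewise} $C^k$ in $n$. This does not affect the chain-rule argument above, because we differentiate $T_x(n)=R(x,S(x,n))$ only in the $x$ variable at fixed $n$, and the uniform bound on $x\mapsto S_x(n)$ is exactly what is needed; joint regularity in $(x,n)$ is neither asserted by the Main Theorem nor required by the definition of bounded $C^k$-representability. Hence the composition produces a (piecewise) representation of $\{\mu_x\}_{x\in X}$ with the required uniform $C^k$-bound in $x$.
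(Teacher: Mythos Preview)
Your proof is correct and follows essentially the same approach as the paper: apply Theorem~\ref{th:CkRepresentability} on $N$ to the pulled-back family $(R_x^{-1})_*\mu_x$, then post-compose with $R_x$ and verify the pushforward identity and the uniform $C^k$ bound in $x$. The paper states the final regularity claim in one line, whereas you spell out the chain-rule argument and correctly note that only $x$-regularity of $S$ (not joint regularity) is needed; this is a welcome clarification but not a departure from the paper's method.
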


\begin{proof}
  By Theorem \ref{th:CkRepresentability} we obtain maps $T_x:N \to N$ such that 
\[
(T_x)_{*}\mu = (R_x^{-1})_{*} \mu_x
\]
and such that $x \mapsto T_x(m)$ is of class
  $C^k(X , M)$, with norm bounded uniformly in $m$.  Then $(R_x\circ T_x)_{*} \mu = \mu_x$, and $x \mapsto
  R_x(T_x(m))$ is of class $C^k(X,M)$ as well.

\end{proof}

\section{Moser coupling for measures with densities bounded away from
  zero}
\label{se:withmaintheorem}

In this section we will prove the $C^k$-representability of a family
of measures on a Riemannian manifold with boundary, under the
assumption that the densities of the measures with respect to the
Riemannian volume are uniformly bounded away from zero.\\
\textbf{Proposition~\ref{p:moser1}}\textit{   Suppose $M$ is an oriented, compact, and connected Riemannian
  manifold (with or without boundary) of class $C^{k+2}$, with $k\geq
  1$, and the family $\bar{\mu}=\set{\mu_{x}}_{x\in X}$ is such that
  \begin{itemize}
  \item
    $\rho(\cdot,\cdot)\in C^{k}(X\times M)$
  \item
    $\rho(x,m)\geq c$ for some $c>0$ and every $(x,m)\in X\times M$.
  \end{itemize}
  Let $\mu = \rho_0 \text{d} \Vol_M$ with $\rho_0 \in C^k(M)$ with
  $\rho_0 \geq c$.  Then, there exists a map $F \in C^k(X\times M, M)$,
  such that, for every $x \in X$, the map $F(x, \cdot): M \to M$ is a diffeomorphism and
\[
F(x, \cdot)_{*} \mu = \mu_{x}.
\]
In particular, the family $\bar{\mu}$ is boundedly
$C^{k}$-representable.
}

\begin{proof}[Proof of Proposition \ref{p:moser1}] The proof uses a
  parametric Moser coupling, together with the particular choice of
  the primitive form and analysis of its dependence on the
  parameter. Given two positive top-dimensional forms, the Moser
  coupling is used to find a self-diffeomorphism of the manifold that
  pulls back one form into another.  The diffeomorphism is found as a
  time-one-map of the flow, that ``follows'' the linear deformation of
  the forms.  Villani describes the Moser coupling in the language of
  partial differential equations rather than differential
  forms in his book \cite{Vil09}.

  Below we describe the skeleton of the construction and then retrace
  all the steps to take care of all the necessary bounds.

Note that by compactness it suffices to show the proposition for a conveniently chosen Riemannian metric $g$ on $M$. 
We choose a metric $g$ such that its coefficients in coordinates are of class $C^{k+1}$, and such that in collar coordinates $(a,s) \in \partial M \times [0,1]$, it holds that
\[
g(a,s) = g_{\partial M}(a) + \mathrm{d} s^2. 
\]
The latter is not essential but simplifies some steps at the end of this proof. 

We choose an orientation on $M$ and set
\[
\text{d} m:=\frac{\text{d}\Vol}{\Vol(M)}
\]
For the purposes of this proof we interpret the measures as positive
top-dimensional volume forms
on $M$:
\[
\begin{split}
\mu_{x}&=\rho_{x}\text{d} \Vol,\\
\mu &= \rho_0 \text{d} \Vol.
\end{split}
\]

\textbf{Step 1:} Consider a linear deformation connecting $\text{d} m$
to $\mu_{x}$. For $t\in[0,1]$ we define a $X$-family of $n$-forms
$\boldsymbol{\alpha}$ and an $I\times X$-family $\boldsymbol{\eta}$ by
\begin{align}
  \alpha_{x}
  &:=
 -  \mu + \mu_x
  \\
  \eta_{t,x}
  &:=
  \mu + t\cdot\alpha_{x}
\end{align}

\textbf{Step 2:} Let $\boldsymbol{n}$ be a unit, inward normal to the
boundary vector-field defined along $\partial M$.  Define an
$X$-family $\boldsymbol{\gamma}$ of $n$-forms, letting $\gamma_x$
be the unique solution to
\begin{equation}
\label{eq:ProblemGammax}
\left\{
\begin{aligned}
\Delta \gamma_x = (\text{d} \delta + \delta \text{d} \,) \gamma_x &= \alpha_x \\
\iota_{\boldsymbol{n}} ( \star \delta \gamma_x) &= 0\\
\int_M \gamma_x &= 0,
\end{aligned}
\right.
\end{equation}
where $\Delta$ denotes the Hodge Laplacian, $\star$ the Hodge star operator, $\delta$ the
codifferential, and $\iota_X \omega$ the contraction of a form $\omega$ with a vector field $X$. Note that the existence of a solution is guaranteed
since $\int_M \alpha_x = 0$ (because all measures are probability measures) and $M$ is connected.

Define an $X$-family $\boldsymbol{\beta}$ of primitives by setting $\beta_x = -
\delta \gamma_x$.  
Since $\gamma_x$ is top-dimensional, it is automatically closed, that is $\mathrm{d} \gamma_x = 0$. Hence $\beta_x$ solves the system
\begin{equation}\label{e:Mb}
\left\{
  \begin{aligned}
    \text{d}\beta_{x}&= - \alpha_{x}\\
    \iota_{\boldsymbol{n}}(\star\beta_{x})&=0
  \end{aligned}
\right.
\end{equation}

\textbf{Step 3:} Define an $I\times X$-family $V$ of boundary-parallel
vector fields on $M$ as the solution to
\begin{equation}\label{e:Mv1}
 \iota_{ V_{t,x} }\eta_{t,x}=\beta_{x}
\end{equation}

\textbf{Step 4:} Solve the ODE
\begin{equation}\label{e:Mfi1}
\left\{
  \begin{aligned}
    \dot\Phi_{t,x}&=V_{t,x}\circ\Phi_{t,x}\\
    \Phi_{0,x}&=\Id_{M}
  \end{aligned}
\right.
\end{equation}
If we now push forward $\text{d} m$ by the flow, we end up with
$\eta_{t,x}$, that is
\[
(\Phi_{t,x})_{*} \mu = \eta_{t,x}.
\]
To check this, we calculate the Lie derivative of $\eta_{t,x}$ by the
Cartan formula
\[
\mathcal{L}_{V_{t,x}} \eta_{t,x} = \text{d}( \iota_{V_{t,x}}
\eta_{t,x} ) = \text{d} \beta_x = - \alpha_x = - \partial_t
\eta_{t,x}.
\]
Since by definition of the Lie derivative,
\[
\partial_t \left[(\Phi_{t,x})_{*} \mu \right] = \partial_t
\left[(\Phi_{t,x}^{-1})^{*} \mu \right] = - \mathcal{L}_{V_{t,x}}
[(\Phi_{t,x})_{*} \mu ],
\]
the forms $\eta_{t,x}$ and $(\Phi_{t,x})_{*} \mu$ satisfy the same PDE
with the same initial and boundary conditions, and solutions to this
PDE are unique.

Thus, the time-one-map defined by $F(x,m):=\Phi_{1,x}(m)$ is the
required parametrisation 
\begin{equation}\label{e:Mf1}
  F:X\times M\to M
\end{equation}
where $M$ in the domain of definition serves the role of $\Omega$ in
(\ref{e:F}).

We will now check that indeed, $F \in C^k( X \times M, M )$.

In order to treat the parameter manifold $X$ and the manifold $M$ more
symmetrically, we interpret the vector field $V$ as a time-independent
vector field on $I \times X \times M$, such that the projection to
$T_{(t,x,m)} M$ of $V$ coincides with $V_{t,x}$, the projection to
$T_{(t,x,m)} X$ vanishes, and the projection to $T_{(t,x,m)}I$ equals
the standard unit vector in $I$ pointing in the positive direction.

Below we will explain why $V \in C^k(I \times X \times M)$.  
Since $k \geq 1$, this
regularity for $V$ immediately implies our claim that $F \in C^k(X \times
M, M)$, by a standard regular
dependence on parameters argument (cf. \cite[Section 1.7]{CoL55}),

Note that problem (\ref{eq:ProblemGammax}) translates into the
following problem for the density $u_x$ of $\gamma_x$ with respect to
the Riemannian volume form (see~\cite[Appendix to Ch. 1]{Vil09})
\begin{equation}
\label{eq:ProblemInTermsOfu}
\left\{
\begin{aligned}
- \Delta u_x = - \star\text{d} \star \text{d} u_x &= \star \alpha_x = \rho_x - \rho_0 \\
\frac{\partial u_x}{\partial \boldsymbol{n}} &= 0\\
\int_M u_x \text{d} \Vol &= 0.
\end{aligned}
\right.
\end{equation}

Note that since $\beta_{t,x} = - \delta \gamma_x$, and $\iota_{V_{t,x}} \eta_{t,x} = \beta_{t,x}$,
\[
V_{t,x} = \frac{1}{\rho_0 + t (\rho_x - \rho_0) } \mathrm{grad} \, u_{x}.
\]

This is the first point at which the choice of metric $g$ becomes convenient. 
It allows us to easily localise to a (boundary) chart by using a cutoff function of class $C^{k+2}$ with vanishing normal derivative. 
We may therefore assume without loss of generality that $M$ is just a domain in the halfspace $\mathbb{R}^n_+$. 

Written out in coordinates, in non-divergence form, 
the coefficients
of the Laplace-Beltrami operator are of class $C^{1}(M)$.  Therefore,
for sufficiently regular solutions $u$ to
\begin{equation}
\label{eq:ProblemForu}
\left\{
\begin{aligned}
- \Delta u &= \rho,\\
\frac{\partial u}{\partial \boldsymbol{n}} &= 0,\\
\int_M u \text{d} \Vol &= 0.
\end{aligned}\right.
\end{equation}
the following a-priori estimate holds (cf. \cite[Ch. 9]{Fri08})
\[
\| u \|_{W^{2,p} (M)} \leq C \| \rho \|_{L^p(M)}.
\]
That the estimate also holds for the unique solution $u$ of (\ref{eq:ProblemForu}) follows for instance by an approximation argument.

Morrey's inequality states that for $p > n$ and $\sigma = 1 - p/n$,
\[
\| u \|_{C^{1,\sigma}(M)} \leq C \| \rho \|_{L^p(M)}.
\]
Hence for $x, y \in X$,
\[
\| V_{t,x} - V_{t,y} \|_{C^\sigma(M)} \leq C \| \rho_x - \rho_y  \|_{L^p(M)}.
\]
Since $\rho \in C^0(X \times M)$, the right-hand side can be made arbitrarily small by choosing $x$ close enough to $y$. 
From here, it follows that $V \in C^0(I \times X \times M )$.

To obtain higher, and joint regularity for $u_x$, one may proceed by induction on the order of the derivative. One differentiates problem (\ref{eq:ProblemInTermsOfu}) with respect to coordinates on $X$ and $M$, to get equations for the derivatives of
$u$. Again, the choice of the metric $g$ is convenient, as it implies that derivatives of $u$ also satisfy homogeneous boundary conditions. 

For instance, for multi-indices $k_1$ and $k_2$,  $|k_1| + |k_2| \leq k$, the equation for the derivative $D_x^{k_1} D_m^{k_2} u_x$ is schematically 
\[
\left\{
\begin{aligned}
- \Delta (D_x^{k_1} D_m^{k_2} u_x) & = \mathrm{r.h.s.}\\
\frac{\partial (D_x^{k_1} D_m^{k_2} u_x)}{ \partial \boldsymbol{n} }&= 0.
\end{aligned}
\right.
\]
The right-hand side r.h.s.~consists of a term with derivatives of $\rho$ and terms that are derivatives of the metric coefficients multiplied by derivatives of $u$. 
Using that $\rho \in C^k( X \times M)$, the metric coefficients are of class $C^{k+1}(M)$, and lower-order derivatives of $u$ are controlled by the induction hypotheses, we conclude that $\mathrm{r.h.s.} \in L^p(X \times M)$ for every $p > n$. 
Hence we may apply the a priori estimate and Morrey's inequality as before to conclude that
\[
V \in C^k(I \times X \times M).
\]
\end{proof}


\section{Radial diffeomorphisms near boundary}

In this section we will construct a family of diffeomorphisms $G_x$ of
$M$, that push forward the standard measure $\mu$ to a family of
measures that, at least close to the boundary $\partial M$, coincide
with $\mu_x$.

Let us recall the assumptions made in Section
\ref{sec.not}.  There we noted that there exists a
reference density $f \in C^{k+2}(M)$ such that for every $x \in X$,
and every $m \in M$, $\rho(x,m) \geq f(m)$.  The goal is to find a
collar around the boundary of $M$, such that on this collar the
measures $\mu_x$ can be represented by monotone rearrangement along a
particular coordinate.

The collar can be parametrised through a map
\[
Q: \partial M \times [0,1] \to M.
\]
The volume form $f \text{d} m$ can be pulled back by this
parametrisation,
\[
\int_{Q(\partial M \times [0,1])} f \text{d} m = \int_{\partial M}
\int_0^1 f J Q \text{d} t \wedge \text{d} \Vol_{\partial M},
\]
where $J Q$ denotes the Jacobian of $Q$.

We will look for diffeomorphisms $G_x: \mathring{M} \to \mathring{M}$,
such that the push forwards $( G_x^{-1} )_{*}\mu_x$ coincide with $\mu$
at least close to the boundary of $M$, that is
\begin{equation}
\label{eq:GxUniformizesBd}
\left. (G_x^{-1})_{*} \mu_x \right|_{\partial M \times [0,1/3] } =
\left. \mu \right|_{\partial M \times [0,1/3]}.
\end{equation}
We require $G_x$ to be of a very specific form close to the boundary.
We will construct functions $\bar{g}_x: \partial M \times [0,1]
\to [0,1]$ such that
\[
G_x(a,t) = (a, \bar{g}_x(a,t)).
\]

For $a \in \partial M$, we define the related function $g_x( a, \cdot )$ by the integral equation
\begin{equation}
\label{eq:MasterEqForg}
\int_0^{g_x(a,t)} \rho(x,a,s) JQ(a,s) \text{d} s = \int_0^{t} f(a,s) JQ(a,s) \text{d} s.
\end{equation}
Note that if $\bar{g}_x (a,t)= g_x(a,t)$ for all $a \in \partial M$ and $0 \leq t \leq 1/3$, then indeed (\ref{eq:GxUniformizesBd}) holds.

We will use this integral equation to show that the maps $G_x$
obtained in this way will have the required regularity properties.  To
obtain the diffeomorphisms of $M$ we will need to modify the maps
$G_x$ in the interior of $M$.

\subsection{Regularity for the collar problem}

The next lemma shows uniform bounds on the regularity of $g_x(a,t)$ with respect to the boundary coordinate and the parameter $x$.
Note that under the current assumptions, uniform bounds on the derivatives in $t$ will typically not hold.

\renewcommand{\theenumi}{\roman{enumi}}
\begin{lemma}
\label{le:RegBoundary}
Suppose that there exist functions $E: \partial M \times [0,1] \to  (0,\infty)$, and $B:\partial M \times [0,1] \to  (0,\infty)$ such that for every multi-index $\beta$ and $j \in \mathbb{N}_0$ such that $|\beta| + j \leq k$, for every $x \in X$, every $a \in \partial M$ and $t \in [0,1)$,
\begin{equation}
\label{eq:RegBoundAssumption1}
\frac{1}{\rho(x,a,t) JQ(a,t)} \left| D_x^{\beta} D_t^{j} \left(\rho(x,a,t) JQ(a, t )\right)\right| \leq E(a,t)^{|\beta|} B(a,t)^{j}
\end{equation}
and
\begin{equation}
\label{eq:RegBoundAssumption2}
\frac{1}{\rho(x,a,t) JQ(a,t)} \int_0^t\left|D_x^{\beta} \rho(x,a,s)\right| JQ(a,s) \mathrm{d} s \leq \frac{E(a,t)^{|\beta|}}{B(a,t)}.
\end{equation}
Then for every multi-index $\beta$, such that $1 \leq |\beta| \leq k$, the function $g_x$ defined by (\ref{eq:MasterEqForg}) satisfies
\begin{equation}
\label{eq:RegBoundConcl}
\left| D_x^\beta g_x(a,t) \right| \leq C(|\beta|,n) \frac{E(a,g_x(a,t))^{|\beta|}}{B(a,g_x(a,t))}.
\end{equation}
In particular, if $E^j(t) \leq A B(t)$ for $j = 1, \dots, k$, and some constant $A > 0$, then
\[
\left| D_x^\beta g_x(a,t) \right| \leq C(|\beta|,n) A.
\]
\end{lemma}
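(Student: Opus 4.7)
The right-hand side of the master equation \rf{eq:MasterEqForg} is independent of $x$, so setting $\Psi(x,a,g) := \int_0^g \rho(x,a,s) JQ(a,s)\,\mathrm{d}s$, we have
\[
\Psi(x,a,g_x(a,t)) = R(a,t),
\]
which is an implicit equation for $g_x$ in $x$ (with $a,t$ fixed). Since $\partial_g \Psi = \rho(x,a,g) JQ(a,g) > 0$, the implicit function theorem applies and differentiation of this identity yields a recursion that I propose to iterate.

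The base case $|\beta|=1$ is immediate: differentiating once gives
\[
D_x^\beta g_x(a,t) = -\frac{1}{\rho(x,a,g_x) JQ(a,g_x)} \int_0^{g_x} D_x^\beta \rho(x,a,s) JQ(a,s)\, \mathrm{d}s,
\]
and assumption \rf{eq:RegBoundAssumption2} bounds the right-hand side by $E(a,g_x)/B(a,g_x)$.

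For the inductive step, I would fix $|\beta|\geq 2$ and assume the bound \rf{eq:RegBoundConcl} for all multi-indices $\gamma$ with $1\le|\gamma|<|\beta|$. Apply $D_x^\beta$ to the identity $\Psi(x,a,g_x)=R$ using the multivariable Fa\`a di Bruno formula to obtain
\[
0 = D_x^\beta[\Psi(x,a,g_x)] = \rho(x,a,g_x) JQ(a,g_x) \cdot D_x^\beta g_x + \mathcal{R},
\]
where the remainder $\mathcal{R}$ is a combinatorial sum of terms of the form
\[
c_{\alpha_0,\gamma_1,\dots,\gamma_j} \cdot D_x^{\alpha_0} \partial_g^{\,j}\Psi(x,a,g_x) \cdot \prod_{i=1}^j D_x^{\gamma_i} g_x,
\]
with $\alpha_0 + \gamma_1 + \cdots + \gamma_j = \beta$, each $|\gamma_i|\geq 1$, and $(\alpha_0,j)\neq (0,1)$. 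Each such term falls in exactly one of three cases. When $j=0$, the factor is $\int_0^{g_x} D_x^\beta \rho \cdot JQ\,\mathrm{d}s$, bounded via \rf{eq:RegBoundAssumption2} by $\rho JQ\cdot E^{|\beta|}/B$. When $j\geq 1$, $\partial_g^{\,j}\Psi = \partial_g^{\,j-1}(\rho JQ)$, so $D_x^{\alpha_0}\partial_g^{\,j}\Psi$ is estimated by \rf{eq:RegBoundAssumption1} as $\rho JQ\cdot E^{|\alpha_0|} B^{\,j-1}$, while each $D_x^{\gamma_i} g_x$ is estimated by the inductive hypothesis as $C(|\gamma_i|,n)\,E^{|\gamma_i|}/B$; multiplying, using $|\alpha_0|+\sum|\gamma_i|=|\beta|$, gives an overall bound $\rho JQ\cdot \mathrm{const}\cdot E^{|\beta|}/B$, with all factors of $E$ and $B$ evaluated at $(a,g_x(a,t))$. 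Summing the finitely many terms and dividing by $\rho(x,a,g_x) JQ(a,g_x)$ gives the desired estimate with some $C(|\beta|,n)$.

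The concluding statement is immediate: under $E(a,t)^k\le A B(a,t)$, monotonicity of $E\ge 1$ gives $E^{|\beta|}\le E^k\le AB$, hence $E^{|\beta|}/B\le A$. The only delicate point is bookkeeping of the Fa\`a di Bruno expansion and verifying that every term but the principal one can indeed be controlled by the induction hypothesis together with the two decay assumptions; the structure of \rf{eq:RegBoundAssumption1} and \rf{eq:RegBoundAssumption2} is precisely tailored so that the powers of $E$ and $B$ combine to give $E^{|\beta|}/B$ uniformly, and no difficulty of substance arises beyond this combinatorial accounting.
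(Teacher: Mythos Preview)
Your proposal is correct and follows essentially the same route as the paper: both proofs reduce to $JQ$ absorbed into $\rho$, handle the base case $|\beta|=1$ by a single differentiation of the master equation together with assumption~\rf{eq:RegBoundAssumption2}, and carry out the inductive step by a Fa\`a di Bruno--type expansion of $D_x^\beta$ applied to the composite $\Psi(x,a,g_x)$, bounding each non-principal term via assumption~\rf{eq:RegBoundAssumption1} and the inductive hypothesis so that the powers of $E$ and $B$ combine to $E^{|\beta|}/B$. The only cosmetic difference is that the paper writes the expansion directly in terms of derivatives of $\rho$ (with $\ell=j-1$ playing the role of your $j$), whereas you organise it through the antiderivative $\Psi$; the bookkeeping is identical.
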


\begin{proof}
First, we note that without loss of generality we may assume that the Jacobian $JQ$ equals $1$, by redefining $\rho$.
Moreover, we may suppress the dependence on the boundary coordinate $a \in \partial M$.

Consider then the defining equation for $g_x(t)$
\begin{equation}
\label{eq:masterRegBoundary}
\int_0^{g_x(t)} \rho(x,s) \mathrm{d} s = \int_0^t f(s) \mathrm{d} s.
\end{equation}
We will prove the Lemma by an inductive argument. 
For the base case, let $\beta$ be a multiindex with $|\beta|=1$, and take the derivative of both sides of equation (\ref{eq:masterRegBoundary}) with respect to $x$ to obtain
\[
D_x^{\beta} g_x(t) \rho(x, g_x(t)) = - \int_0^{g_x(t)} D_x^{\beta} \rho(x,s) \mathrm{d} s,
\]
for every multiindex $\beta$ with $|\beta|=1$.
The assumption (\ref{eq:RegBoundAssumption2}) applied to the right hand side of this equation immediately implies the conclusion for $|\beta| = 1$.

Next, we assume that the conclusion has been shown for all $\beta$ with $|\beta| \leq m - 1$ for some positive integer $m \leq k$. 
We then choose $\beta$ with $|\beta| = m$, and derive an equation for $D_x^\beta g_x(t)$ by taking $\beta$ derivatives with respect to $x$ in equation (\ref{eq:masterRegBoundary}). 
The resulting exact expression is lengthy, but for our purposes it suffices to realize that it is of the form
\begin{equation}
\label{eq:RegBoundGenFormDeriv}
\begin{split}
D_x^\beta g_x(t)  \rho(x,g_x(t)) &=
- \sum_{} a_{\beta_0,\beta_1, \dots, \beta_\ell} D_1^{\beta_0} D_2^{\ell} \rho(x, g_x(t)) D_x^{\beta_1} g_x(t) \cdots D_x^{\beta_{\ell+1}} g_x(t)\\
&\quad - \int_0^{g_x(t)} D_x^\beta \rho(x,s) \mathrm{d} s,
\end{split}
\end{equation}
where the coefficients $a_{\beta_0,\beta_1,\dots,\beta_\ell}$ are positive integers, and the sum is over indices that satisfy the following conditions
\[
\left\{
\begin{aligned}
|\beta_0| &\leq |\beta| - 1, \\
\ell &\leq |\beta| - |\beta_0| - 1, \\
1 &\leq |\beta_i| \leq |\beta| - 1, \qquad \text{ for all } i = 1,\dots, \ell + 1,\\
\beta&= \beta_0 + \cdots + \beta_{\ell+1}.
\end{aligned}
\right.
\]
Moreover, we have written $D_1^{\beta_0} D_2^\ell \rho (x , g_x(t))$ instead of $D_x^{\beta_0} D_t^\ell \rho(x, g_x(t))$ to emphasize that these are partial derivatives of the function $\rho$ evaluated in the given point.
The important features of the terms in the sum of the right hand side of the equation (\ref{eq:RegBoundGenFormDeriv}) are that there are precisely $|\beta|-|\beta_0| = \ell + 1$ factors of the form $D_x^{\beta_i} g_x(t)$. 
By the induction hypothesis,
\[
\left| D_x^{\beta_i} g_x(t) \right| \leq C(|\beta_i|, n ) \frac{E(g_x(t))^{|\beta_i|}}{B(g_x(t))},
\]
while by assumption
\[
\frac{1}{\rho(x,g_x(t))}\left| D_1^{\beta_0} D_2^\ell \rho(x, g_x(t)) \right|\leq E(g_x(t))^{|\beta_0|} B(g_x(t))^{\ell}.
\]
Since in particular $|\beta| = |\beta_0| + \cdots + |\beta_{\ell+1}|$, it follows that each of the terms in the sum of the right hand side of equation (\ref{eq:RegBoundGenFormDeriv}) can be estimated by
\[
\left| D_1^{\beta_0} D_2^{\ell} \rho(x, g_x(t)) D_x^{\beta_1} g_x(t) \cdots D_x^{\beta_{\ell+1}} g_x(t) \right|
\leq C(|\beta|,n) \frac{E(g_x(t))^{|\beta|}}{B(g_x(t))}.
\]
That the last term on the right hand side of equation (\ref{eq:RegBoundGenFormDeriv}) can be estimated by a term of this form follows from the assumption (\ref{eq:RegBoundAssumption2}).
Therefore, the conclusion (\ref{eq:RegBoundConcl}) also holds for $|\beta| = m$.
\end{proof}

\subsection{Construction of the diffeomorphisms rearranging the collar}

Based on the regularity properties derived in the previous section, we may now construct the diffeomorphisms $G_x$.

\begin{proposition}
\label{pr:DiffCollar}
Under the assumptions in Section \ref{suse:assumptions}, there exists a family of diffeomorphisms $G_x:\mathring{M} \to \mathring{M}$ with the following properties:
\begin{enumerate}
\item \label{item:DefOnCompl} $G_x(m) = m$ for all $m \in M \backslash (\partial M \times [0,2/3))$
\item \label{item:DefOnCollar} $G_x$ is radial, that is there exists $\bar{g}_x(a,t)$ such that $G_x(a,t) = (a, \bar{g}_x( a,t ) )$ for all $(a, t) \in \partial M \times [0,1)$.
\item \label{item:Agreegx} $\bar{g}_x(a,t) = g_x(a,t)$, for all $t \in [0,1/3]$, where $g_x$ is defined in (\ref{eq:MasterEqForg}), and therefore
\[
\left. (G_x^{-1})_{*} \mu_x \right|_{\partial M \times [0,1/3]} = \left. \mu \right|_{\partial M \times [0,1/3]}.
\]
\item \label{item:JointReg} The map $(x,m) \mapsto G_x(m)$ is of class $C^k(X \times \mathring{M}, \mathring{M})$.
\item \label{item:UnifRegx} The $C^k$-norm of the map $x \mapsto G_x(m)$ is bounded, uniformly in $m \in M$. 
\item \label{item:StayAwayFromBoundary} There exists a $t_*> 0$ such that $\bar{g}_x(a,1/6) \geq t_*$ for all $x \in X$. 
\item \label{item:DensityBound} The density of $(G_x^{-1})_{*} \mu_x$ with respect to $\mathrm{d} \Vol$ is uniformly bounded from below and bounded in $C^k(X\times M \backslash (\partial M \times[0,1/6]))$.
\item The diffeomorphism $G_x: \mathring{M} \to \mathring{M}$ extends to a homeomorphism $\overline{M}\to\overline{M}$, equal to the identity on $\partial M$.
\end{enumerate}
\end{proposition}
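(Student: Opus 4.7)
The plan is to define $g_x$ by (\ref{eq:MasterEqForg}) and patch it smoothly to the identity using a fixed cutoff $\chi \in C^{\infty}([0,1])$ with $\chi \equiv 1$ on $[0,1/3]$, $\chi \equiv 0$ on $[2/3,1]$, and $\chi' \leq 0$. I set
\[
\bar g_x(a,t) := \chi(t)\, g_x(a,t) + (1-\chi(t))\, t,
\]
$G_x(a,t) := (a, \bar g_x(a,t))$ on the collar, and $G_x := \Id$ elsewhere. Well-definedness of $g_x$ as a map into $[0,1]$ follows from $\rho \geq f$ (Remark~\ref{rmk}) applied to (\ref{eq:MasterEqForg}). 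Properties (\ref{item:DefOnCompl}) and (\ref{item:DefOnCollar}) are built into the construction; property (\ref{item:Agreegx}) is immediate from $\bar g_x = g_x$ on $[0,1/3]$ together with differentiation of (\ref{eq:MasterEqForg}); and since $\bar g_x(a,0)=0$ extends $G_x$ continuously to $\partial M$ as the identity, property (viii) holds.

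\textbf{Monotonicity and property (\ref{item:StayAwayFromBoundary}).} The single use of the full inequality $\rho \geq f$ (beyond well-definedness) is to note that (\ref{eq:MasterEqForg}) forces $g_x(a,t) \leq t$. From this,
\[
\partial_t \bar g_x(a,t) = -\chi'(t)\,\bigl(t - g_x(a,t)\bigr) + \chi(t)\,\partial_t g_x(a,t) + \bigl(1-\chi(t)\bigr)
\]
is a sum of three non-negative terms, strictly positive for $t \in (0,1)$: either $1-\chi(t)>0$, or else $\chi(t)=1$ and $\partial_t g_x = (fJQ)/((\rho JQ)\circ g_x) > 0$ by positivity of $f$ and $\rho$ on the interior. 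Hence $G_x$ restricts to a diffeomorphism of $\mathring M$. For (\ref{item:StayAwayFromBoundary}), compactness and positivity of $f$ on the interior give $c_1 := \inf_a \int_0^{1/6} f(a,s) JQ(a,s)\,\mathrm{d}s > 0$, while $C_1 := \sup_{x,a,s} \rho(x,a,s) JQ(a,s) < \infty$, so for any $t_* \leq c_1/C_1$ one has $\int_0^{t_*}\rho JQ\,\mathrm{d}s \leq c_1$, and (\ref{eq:MasterEqForg}) at $t=1/6$ yields $g_x(a,1/6) \geq t_*$ uniformly in $(x,a)$.

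\textbf{Regularity --- the main obstacle.} Property (\ref{item:UnifRegx}) follows by absorbing the bounded-above-and-below factor $JQ$ into $E$ and $B$, applying Lemma~\ref{le:RegBoundary}, and invoking the closure assumption (\ref{eq:ClosureAssumption}) to obtain $|D_x^\beta g_x(a,t)| \leq C(|\beta|,n)\,A$ uniformly in $(x,a,t)$ for $1 \leq |\beta| \leq k$; the bound transfers to $\bar g_x$ since $\chi$ and the identity carry no $x$-dependence. Joint $C^k$-regularity on $X \times \mathring M$ (property (\ref{item:JointReg})) is obtained away from $t=0$ by the implicit function theorem applied to (\ref{eq:MasterEqForg}), using $\rho, f, JQ \in C^k$ with $\rho > 0$ in the interior. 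For (\ref{item:DensityBound}), one computes that the density of $(G_x^{-1})_{*}\mu_x$ with respect to $\mathrm{d}\Vol$ on the collar equals
\[
\tilde\rho(x,a,t) = \rho(x,a,\bar g_x(a,t))\,\frac{JQ(a,\bar g_x(a,t))}{JQ(a,t)}\, \partial_t \bar g_x(a,t),
\]
which on $\partial M \times [1/6, 1/3]$ reduces to $f(a,t)$ (bounded below by compactness and interior positivity), on $[1/3, 2/3]$ has all factors uniformly positive and jointly $C^k$ because monotonicity of $\bar g_x$ and property (\ref{item:StayAwayFromBoundary}) give $\bar g_x(a,t) \geq g_x(a,1/6) \geq t_*$, and off the collar equals $\rho(x,\cdot)$ directly. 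The main obstacle is exactly this transition zone $[1/3, 2/3]$: Lemma~\ref{le:RegBoundary} controls only $x$-derivatives, so joint $C^k$-smoothness in $(x,a,t)$ has to come from standard smooth-dependence-on-parameters for (\ref{eq:MasterEqForg}), which is legitimate only because on this zone $g_x$ is confined to a compact subset of the interior where $\rho$ is bounded away from zero.
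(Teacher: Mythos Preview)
Your proof is correct and follows essentially the same approach as the paper: the same cutoff interpolation $\bar g_x = \chi g_x + (1-\chi)t$, the same monotonicity argument via $g_x \leq t$, the invocation of Lemma~\ref{le:RegBoundary} for item~(\ref{item:UnifRegx}), and the same density computation for item~(\ref{item:DensityBound}). The only noticeable difference is that for item~(\ref{item:StayAwayFromBoundary}) you give a direct quantitative estimate ($t_* \leq c_1/C_1$) whereas the paper simply appeals to continuity of $(x,m)\mapsto G_x(m)$ and compactness of $X\times\partial M$; both arguments are valid and elementary.
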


\begin{proof}
We will construct $G_x$ from the functions $g_x$ defined by (\ref{eq:MasterEqForg}). 
In order to ensure that $G_x$ is the identity on the complement of the collar, we need to interpolate $g_x$ to the identity.
Let $\eta:[0,1]\to [0,1]$ be a smooth, nonincreasing function, with $\eta(t) = 1 $ for $0\leq t \leq 1/3$ and $\eta(t) = 0$ for $t \geq 2/3$.
Define
\begin{equation}
\label{eq:Defgbar}
\bar{g}_x(a,t) = \eta(t) g_x(a,t) + (1- \eta(t)) t.
\end{equation}
For the following, we use that $f(a,t) < \rho(x,a,t)$ for all $(a,t) \in \partial M \times [0,1]$, and therefore $g_x(a,t) \leq t$. 
Indeed, this implies that
\[
\partial_t \bar{g}_x(a,t) = \eta'(t) (g_x(a,t) - t) + \eta(t) \partial_t g_x(a,t) + 1-\eta(t) > 0.
\]
We define $G_x(a,t) = (a, \bar{g}_x(a,t))$ for all $(a,t) \in \partial M \times [0,1)$, and $G_x(m) = m$ for $m \in M \backslash (\partial M \times [0,1))$, which immediately shows items (\ref{item:DefOnCompl}) - (\ref{item:Agreegx}).

Item (\ref{item:UnifRegx}), the uniform boundedness of the $C^k$-norm of the map $x \mapsto G_x(m)$, was established in Lemma \ref{le:RegBoundary}. 
It is easier to prove the regularity of the map $(x,m) \mapsto G_x(m)$ as stated in (\ref{item:JointReg}). 
The required regularity of $g_x$ immediately follows from taking $j$ derivatives in the defining equation for $g_x$
\[
\int_0^{g_x(a,t)} \rho(x,a,s) J Q(a,s) \mathrm{d} s = \int_0^t f(a,s) JQ(a,s) \mathrm{d} s.
\]
Unlike for the maps $x \mapsto G_x(m)$, we now do not require uniform bounds on the derivatives as $m$ approaches the boundary.
The regularity of $\bar{g}_x(a,t)$ and therefore of $G_x(t)$ follows from its definition (\ref{eq:Defgbar}) and the regularity of the cutoff function $\eta$.

Item (\ref{item:StayAwayFromBoundary}) is a direct consequence of the continuity of the maps $(x,m) \mapsto G_x(m)$ and the compactness of $X$ and $\partial M \times[1/3,1]$.

Since $G_x$ is the identity on $M \backslash (\partial M \times [0,2/3))$, by the compactness of $X$, the densities of $(G_x^{-1})_{*} \mu_x$ are certainly bounded on $M \backslash (\partial M \times [0,2/3))$.
If $\nu(x,a,t)$ denotes the density of $(G_x^{-1})_{*} \mu_x$ with respect to $\mathrm{d} \Vol$, the following equation holds for $\nu$,
\[
\nu(x,a,t) JQ (a,t) = \rho(x,a,\bar{g}_x(a,t)) JQ(a, \bar{g}_x(a,t) ) \partial_t \bar{g}_x(a,t).
\]
Note that $\bar{g}_x(a,t) \geq g_x(a,t) > c$ for every $(a,t) \in \partial M \times [1/6,1]$. 
Therefore both $\rho(x,a,g_x(a,t))$ and $\rho(x,a,\bar{g}_x(a,t))$ are uniformly bounded away from zero. 
On the other hand,
\[
f(x,a,t) JQ(a,t) = \rho(x,a,g_x(a,t)) JQ(a, g_x(a,t)) \partial_t g_x(a,t),
\]
which implies that $\partial_t g_x$ is bounded away from zero as well, which in turn implies that $\partial_t \bar{g}_x(a,t)$ is uniformly bounded away from zero. 
This finishes the proof of (\ref{item:DensityBound}).

Finally, from the definition of $g_x$ it is also clear that $g_x(a,t)\to a$ as $t \downarrow 0$. 
Therefore, $G_x$ extends to a homeomorphism from $\bar{M}$ to itself.

\end{proof}

\section{Proof of the Main Theorem}

We will now combine the results of the previous two sections to give a proof of Theorem \ref{th:CkRepresentability}.

\begin{proof}[Proof of Theorem \ref{th:CkRepresentability}]
  As mentioned above, we will construct transport maps $T_x: M \to M$
  as a composition of a diffeomorphism $G_x: M \to M$ and maps $F_x$.
  The diffeomorphism $G_x$ is provided by Proposition
  \ref{pr:DiffCollar}.  Define $\tilde{\mu}_x = (G_x^{-1})_{*} \mu_x$.
  The behavior of $\tilde{\mu}_x$ in the collar $\partial M \times
  [0,1/3]$ is independent of $x$ and coincides with $f \text{d} \Vol$.
  We select a neighbourhood $V$ of $\partial M$, contained in $\partial
  M \times [0,1/3]$ but such that $\partial M \times[0,1/6] \subset
  V$, and such that the boundary of $M \backslash V$ is of class
  $C^{k+2}$.  By the conclusions of Proposition \ref{pr:DiffCollar},
  we may apply Proposition \ref{p:moser1} to the family of measures
  $\tilde{\mu}_x$, restricted to the complement of the collar $M
  \backslash V$.  This way we obtain a family of diffeomorphisms $F_x:
  M \backslash V \to M \backslash V$, for which the derivatives are
  uniformly bounded, and such that
\[
(F_x)_{*} \mu = \tilde{\mu}_x = (G_x^{-1})_{*} \mu.
\]
We extend the maps $F_x$ thus obtained by the identity on $V$.
It follows that 
\[
(T_x)_{*} \mu = (G_x)_{*}(F_x)_{*} \mu = (G_x)_{*} (G_x^{-1})_{*} \mu_x = \mu_x.
\] 

For $m \in \partial M \times [0,1/3]$, the image $F_x(m) = m$ is in
particular independent of $x$.  The uniform bounds on the derivatives
of the map $x \mapsto G_x(m)$, shown in Proposition
\ref{pr:DiffCollar} then imply that the derivatives of $G_x(F_x(m))$
are uniformly bounded.  For $m \in M \backslash V$, we may use that the derivatives of $(x,m) \mapsto G_x(m)$ are
bounded by compactness of $X \times (M \backslash V)$.
\end{proof}

\section{Examples}
\label{se:Examples}

We would now like to give examples of families of probability measures that are not boundedly $C^1$-representabe.
In both examples, the families have regular densities. In the first example, the order of decay towards the boundary is different for different values of the parameter.

\begin{example}
\label{ex:NonUniformDecay}
Let $M=[0,1]$ and $X=[-1,1]$ and 
\[
\rho(x,m) = 2 x^2 m + 5(1-x^2) m^4. 
\]
Suppose that in fact, a representation $T:X \times \Omega \to M$ exists, such that 
\[
T(x, \cdot)_{*} \mathbb{P} = \rho(x ,\cdot) \mathrm{d} m,
\]
for a probability measure $\mathbb{P}$ on $\Omega$. 
Given $x$, 
\[
\mathbb{P} \{ \omega \in \Omega \, | \, T(x, \omega) \leq |x| / 2 \} \geq 4 x^4,
\]
while 
\[
\mathbb{P} \{ \omega \in \Omega \, | \, T(0, \omega) \leq |x|^{4/5} \} = x^4.
\]
As a consequence, for any $x \neq 0$ small enough, there exists a set of $\omega \in \Omega$ of positive measure, such that
\[
\frac{| T(x,\omega) - T(0,\omega) |}{|x|} \geq \frac{1}{2 |x|^{1/5}},
\]
showing that the first derivatives of $|T(x,\omega)|$ are not bounded uniformly in $\omega$. 
In this example, assumption (\ref{eq:DerivativeAssumption}) fails for $\beta = 1$ and $j=0$. 
\end{example}

In the second example, the dependence on $x$ is very mild and in particular, the qualitative behavior of the decay does not change with $x$. However, the decay rate of the density towards the boundary is not well defined. 

\begin{example}
\label{ex:DecayNotWellDefined}
For a second example in which bounded $C^1$ representability does not hold, again consider $M = [0,1]$ and $X = [-1,1]$,
but now define
\[
\rho(x,m) := (2 + x) m^5 \sin^2(1/m) + m^{30} + \tilde{\rho}(x,m),
\]
where $\tilde{\rho}(x,m)$ is there for normalization purposes, and is assumed to be smooth on $X \times M$ and zero for $m \leq 1/2$. 

Note also that $\rho \in C^2(X \times M)$. However, a bounded $C^1$ representation does not exist. Suppose on the contrary that such a representation $T: X\times M \to M$ exists.
We give the following heuristic argument, that will be made rigorous at the end of the section. The flux through $m_k:=1/(\pi k)$ is of the order $m_k^6$, while the density is merely $m_k^{30}$. 
It follows that the average velocity of the particles passing through $m_k$ is of the order $m_k^{-24}$, which can be made arbitrarily large by picking $m_k$ small enough.

In this example, no functions $E$ and $B$ can be found that simultaneously satisfy the assumptions (\ref{eq:DerivativeAssumption}-\ref{eq:ClosureAssumption}), due to the strong blow up of the left-hand-side in assumption (\ref{eq:IntegratedAssumption}).
\end{example}

\subsection{Uniform decay towards the boundary}

To get further intuition for the assumptions, let us consider the special case of $M = [0,1]$, and $X = [0,1]$.
We write $\rho(x,t) = \nu(x,t) e^{-H(t)}$ for a convex, decreasing function $H:[0,1]\to \mathbb{R}$, such that $\lim_{t\downarrow 0} H(t) = \infty$.
Suppose $\nu \in C^k(X\times M)$, and $\nu \geq 1$. 

Let us first try to estimate the expression appearing in assumption (\ref{eq:IntegratedAssumption}),
\[
\begin{split}
\frac{1}{\rho(x,t)} \int_0^t |D_x^\beta \rho(x,s)| \text{d} s
& \leq \frac{1}{\nu(x,t)} \int_0^t |D_x^\beta \nu_x| e^{-(H(s)-H(t)) } \text{d} s \\
\end{split}
\]
Since $H$ is convex,
\[
H'(t)(s-t) \leq H(s) - H(t).
\]
Therefore
\[
\begin{split}
\frac{1}{\rho(x,t)} \int_0^t |D_x^\beta \rho(x,s)| \text{d} s 
&\leq C \int_0^t e^{-H'(t) (s-t)} \text{d} s \\
&\leq - C \frac{1}{H'(t)}.
\end{split}
\]
Therefore, in order to satisfy assumption (\ref{eq:IntegratedAssumption}), one could choose for instance $E \equiv C$, and $B(t) = - 1/H'(t)$. 
With these choices, assumption (\ref{eq:DerivativeAssumption}) would require that
\begin{equation}
\label{eq:BoundInTermsH}
|H^{(k)} (t) |\leq C (H'(t))^k.
\end{equation}
This condition is easily verified for a variety of functions, such as $e^{-H} = t^\alpha$, and for $H = t^{-\alpha}$ for some $\alpha > 0$. However, there are other examples of $H$ for which this condition fails, but with different choices of $E$ and $B$ the assumptions can still be made to hold.

Indeed, if $H(t) = - \alpha \ln(t)$ for some $\alpha > 0$, that is, if the density $e^{-H}$ has a power decay towards the boundary of $M$, 
\[
\left(\frac{1}{(H'(t))^j}\right)^{(j)}  = \left((-1)^j \frac{t^j}{\alpha^j} \right)^{(j)} = (-1)^j \frac{j!}{\alpha^j},
\]
so that the assumptions on $H$ hold with the function $E$ identical to $1$. 

Similarly, when $H(t) = t^{-\alpha}$, for $\alpha > 0$, 
\[
\left( \frac{1}{(H'(t))^j} \right)^{(j)} = \left( (-1)^j \alpha^j t^{j(\alpha+1)} \right)^{(j)} = (-1)^j \alpha^j  t^{j \alpha}
\]
the function $E$ identical to $1$ works as well.

However, there are examples of $H$ for which (\ref{eq:BoundInTermsH}) does not hold, for instance for
\[
H(t) = \log( - \log(t) ).
\]
In that case, the assumptions allow for enough flexibility to still be able to show representability. 
Indeed, one could choose $B(t) = C_1/t$, and $E(t) = C_2 \log(t)$.

\section{Further observations}

In this section we give two necessary conditions for bounded representability. 
Although both follow from very simple observations, they can be useful to show that certain families of probability measures cannot be boundedly represented.

\subsection{Obstruction to bounded Lipschitz representability}

Both Example \ref{ex:NonUniformDecay} and \ref{ex:DecayNotWellDefined} exhibit a similar obstruction to bounded representability, which we extract as Theorem \ref{th:obstructionRepr} below. 
Before stating the theorem, we introduce some notation. 

Let $\Pi( \mu, \bar{\mu})$ denote the family of Borel probability measures on $M \times M$ with marginals $\mu$ and $\bar{\mu}$. 
We consider the supremum Wasserstein distance on $M$ given by
\[
W_\infty( \mu, \bar{\mu} ) = \inf_{ \pi \in \Pi( \mu, \bar{\mu} ) } \underset{(m_1, m_2) \in (M \times M, \pi)}{\esssup}  d_M( m_1, m_2). 
\]

\begin{theorem}
\label{th:obstructionRepr}
Let $\{\mu_x\}_{ x \in X }$ be a family of probability measures on a complete, separable metric space $M$, parametrized by a metric space $X$.
If the family $\{ \mu_x \}_{x\in X}$ is boundedly Lipschitz representable then
\[
\sup_{x,y \in X} \frac{W_\infty( \mu_x, \mu_y )}{d_X(x,y)} < \infty.
\]
\end{theorem}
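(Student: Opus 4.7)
The plan is to use the representing map to construct an explicit coupling between $\mu_x$ and $\mu_y$, and then estimate its $\infty$-transport cost directly from the uniform Lipschitz bound.

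First, suppose that $\{\mu_x\}_{x \in X}$ is boundedly Lipschitz representable. By definition, this gives a measurable map $T: X \times \Omega \to M$ on some probability space $(\Omega, \mathbb{P})$ such that $T(x,\cdot)_{*}\mathbb{P} = \mu_x$ for every $x \in X$, together with a constant $L > 0$ such that for every $\omega \in \Omega$,
\[
d_M\bigl(T(x,\omega), T(y,\omega)\bigr) \leq L \cdot d_X(x,y) \qquad \text{for all } x,y \in X.
\]

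Next, for any fixed pair $x, y \in X$, define the map $\Psi_{x,y}: \Omega \to M \times M$ by $\Psi_{x,y}(\omega) := (T(x,\omega), T(y,\omega))$, and set $\pi_{x,y} := (\Psi_{x,y})_{*}\mathbb{P}$. Then $\pi_{x,y}$ is a Borel probability measure on $M \times M$ whose marginals are exactly $\mu_x$ and $\mu_y$, so $\pi_{x,y} \in \Pi(\mu_x, \mu_y)$. Moreover, by the uniform Lipschitz bound, the function $(m_1, m_2) \mapsto d_M(m_1, m_2)$ is bounded above by $L \cdot d_X(x,y)$ on the image of $\Psi_{x,y}$, and hence $\pi_{x,y}$-almost everywhere. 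Consequently,
\[
W_\infty(\mu_x, \mu_y) \leq \esssup_{(m_1, m_2) \in (M \times M, \pi_{x,y})} d_M(m_1, m_2) \leq L \cdot d_X(x,y).
\]
Dividing by $d_X(x,y)$ and taking the supremum over all $x \neq y$ gives the required finiteness, with the supremum bounded by $L$.

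There is essentially no obstacle in this argument beyond verifying that $\Psi_{x,y}$ is measurable, which follows from the measurability of $T$; the key conceptual point is simply that a bounded Lipschitz representation provides a canonical simultaneous coupling of all pairs $(\mu_x, \mu_y)$ through a common source of randomness.
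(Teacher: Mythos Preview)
Your proof is correct and follows essentially the same approach as the paper: push forward $\mathbb{P}$ by the pair map $(T(x,\cdot),T(y,\cdot))$ to obtain a coupling in $\Pi(\mu_x,\mu_y)$, then use the uniform Lipschitz bound on $T(\cdot,\omega)$ to bound the $\esssup$ cost by $L\,d_X(x,y)$. Your write-up is in fact slightly more careful than the paper's (you note the measurability of $\Psi_{x,y}$ and spell out the $\pi_{x,y}$-a.e.\ bound explicitly).
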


\begin{proof}
Suppose that a constant $L >0$ and a representation $T : X \times \Omega \to X$ exist, such that the Lipschitz constant of the map $T(\cdot, \omega)$ is bounded by $L$ for $\mathbb{P}$-a.e. $\omega \in \Omega$. 
Let $x, y \in X$ and note that the representation $T$ induces a transportation plan
\[
(T(x, \cdot) ,T(y, \cdot))_{*} \mathbb{P} \in \Pi(\mu_x, \mu_y).
\]
Since $T$ is a bounded Lipschitz representation, 
\[
\mathbb{P} \left\{ \omega \in \Omega \, | \, \frac{d_M( T(x, \omega), T(y, \omega) )}{d_X(x,y)} > L \right\} > 0,
\]
from which it follows that
\[
\frac{W_\infty( \mu_x, \mu_y )}{d_X(x,y)}\leq L.
\]
\end{proof}

If $M =[0,1]$, the distance $W_\infty(\mu, \bar{\mu})$ is easily calculated with use of the distribution functions 
\[
F_\mu(x) := \mu( [0,x] ).
\]
If we define the generalized inverse of a monotone function $F$ by
\[
F^{-1}(t) = \inf\left \{m \in [0,1] \,| \, F(x) > t \right\},
\]
then 
\[
W_\infty(\mu, \bar{\mu}) = \sup_{t \in [0,1]} \left| F_\mu^{-1}(t) - F_{\bar{\mu}}^{-1}(t) \right|,
\]
and this optimal value is reached by monotone rearrangement (although in general it is assumed by many other transportation plans as well).

Let us recheck that bounded $C^1$ representation is impossible in Example \ref{ex:DecayNotWellDefined}. 
Let again $m_k := 1 / (k \pi)$ for some large $k \in \mathbb{N}$. 
Then we may choose $\delta > 0$ such that (writing $F_x$ for $F_{\mu_x}$), 
\[
F_{x}(m_k + \delta) - F_x( m_k) < 2 \delta m_k^{30}.
\]
On the other hand, for some constant $C > 0$,
\[
F_{x+ C \delta m_k^{24}}(m_k) - F_x(m_k) \geq 4 \delta m_k^{30},
\]
so that
\[
F_{x+C \delta m_k^{24}}^{-1}(F_x(m_k)) - F_x^{-1}(F_x(m_k)) \geq \delta,
\]
and therefore
\[
\frac{W_\infty( \mu_x, \mu_{x + \delta m_k}) }{C \delta m_k^{24} } \geq \frac{1}{C m_k^{24}},
\]
which becomes arbitrarily large as $m_k \downarrow 0$.

\subsection{Obstruction to bounded $C^k$-representability}

We conclude with a necessary condition for bounded $C^k$-representability. 
In some situations, this condition can be used to quickly rule out the existence of a bounded $C^k$ representation.

\begin{proposition}
\label{pr:ObstructionTesting}
Let $M$ and $X$ be compact Riemannian manifolds of class $C^{k}$ and let $\{\mu_x\}_{x \in X}$ be a family of Borel probability measures on $M$. If the family $\{ \mu_x\}_{x \in X}$ is boundedly $C^k$ representable, then for every function $h \in C^k(M)$, the function $E_h: X \to \mathbb{R}$ defined by
\[
E_h(x) := \int_M h \mathrm{d} \mu_x
\]
is also of class $C^k(M)$.
\end{proposition}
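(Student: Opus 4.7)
The plan is to realise $E_h$ as an expectation and differentiate under the integral sign, using the fact that a bounded $C^k$-representation provides a uniform dominating function at every order of differentiation.

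Let $T:X\times\Omega\to M$ be a bounded $C^k$-representation of $\{\mu_x\}_{x\in X}$, so that $T(x,\cdot)_{*}\mathbb{P}=\mu_x$ and the $C^k$-seminorm of $x\mapsto T(x,\omega)$ is bounded by some constant $C>0$ independent of $\omega\in\Omega$. By the change-of-variables formula,
\[
E_h(x)=\int_M h\,\mathrm{d}\mu_x=\int_\Omega h(T(x,\omega))\,\mathrm{d}\mathbb{P}(\omega).
\]
Fix $\omega\in\Omega$. Since $h\in C^k(M)$ with $M$ compact, $h$ has bounded derivatives up to order $k$, and the map $x\mapsto T(x,\omega)$ is $C^k$ with derivatives bounded, in local coordinates on $X$ and $M$, uniformly in $\omega$ by a constant depending only on $C$. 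By the Fa\`a di Bruno formula applied to the composition $x\mapsto h(T(x,\omega))$, each derivative $D_x^{\alpha}[h\circ T(\cdot,\omega)]$ with $|\alpha|\leq k$ is a polynomial expression in the derivatives of $h$ (evaluated at $T(x,\omega)$) and the derivatives of $T(\cdot,\omega)$ up to order $|\alpha|$. Consequently there exists a constant $K=K(h,C,k)$ such that
\[
\sup_{\omega\in\Omega}\sup_{x\in X}\bigl|D_x^{\alpha}\bigl[h(T(x,\omega))\bigr]\bigr|\leq K\fa |\alpha|\leq k.
\]

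The step I would carry out next is iterated differentiation under the integral sign. Since $\mathbb{P}$ is a probability measure, the constant $K$ is $\mathbb{P}$-integrable, so for any first-order directional derivative $\partial_v$ on $X$ the difference quotients of $\omega\mapsto h(T(x,\omega))$ are dominated by $K$ and converge pointwise to $\partial_v[h(T(\cdot,\omega))](x)$; dominated convergence yields
\[
\partial_v E_h(x)=\int_\Omega \partial_v\bigl[h(T(\cdot,\omega))\bigr](x)\,\mathrm{d}\mathbb{P}(\omega).
\]
Iterating this argument $k$ times, each derivative $D^{\alpha} E_h(x)$ with $|\alpha|\leq k$ exists and equals $\int_\Omega D_x^{\alpha}[h(T(\cdot,\omega))](x)\,\mathrm{d}\mathbb{P}(\omega)$. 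Continuity of $D^{\alpha}E_h$ then follows from one more application of dominated convergence: as $x_n\to x$ in $X$ the integrands converge pointwise (by continuity of $D_x^{\alpha}[h\circ T(\cdot,\omega)]$ in $x$) and remain dominated by $K$.

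The only place that really needs care is the uniform bound on the composition's derivatives, i.e.\ checking that the Fa\`a di Bruno expansion indeed produces a bound depending only on $h$, $C$, $k$ and not on $\omega$; this is the main (though routine) obstacle and it is handled precisely by the hypothesis that the $C^k$-seminorm of $x\mapsto T(x,\omega)$ is bounded uniformly in $\omega$. Once this bound is in place, the statement $E_h\in C^k(X)$ follows from the iterated differentiation under the integral described above.
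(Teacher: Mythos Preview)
Your proof is correct and follows essentially the same approach as the paper: write $E_h(x)=\int_\Omega h(T(x,\omega))\,\mathrm{d}\mathbb{P}(\omega)$, observe that the $C^k$-norm of the composition is bounded uniformly in $\omega$, and conclude via dominated convergence. The paper compresses the differentiation-under-the-integral and continuity steps into a single sentence, whereas you spell them out via Fa\`a di Bruno and iterated domination, but there is no substantive difference.
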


\begin{proof}
Let $(\Omega, \mathbb{P})$ be a probability space and suppose a representation $T:X \times \Omega \to M$ exists such that for $\mathbb{P}$-a.e. $\omega \in \Omega$, 
the $C^k$-norm of the map $T(\cdot, \omega)$ is less than $C$. 
Since $T(x, \cdot)_* \mathbb{P} = \mu_x$, the following identity holds
\[
\begin{split}
E_h(x) &= \int_\Omega h(T(x,\omega) ) \mathrm{d} \mathbb{P}(\omega).
\end{split}
\]
Note the $C^k$-norm of the function $h(T(x,\omega))$ is uniformly bounded. It follows that $E_h$ is of class $C^k(M)$ by the dominated convergence theorem.
\end{proof}

\section*{Acknowledgements}
The research leading to these results
has received funding from the European Research Council under the
European Union's Seventh Framework Programme (FP7/2007-2013) / ERC
grant agreement n$^\circ$~267087. C.S.R. has also been supported by the Brazilian agency: grant \#2015/02230-9 and grant \#2016/00332-1, S\~{a}o Paulo Research Foundation (FAPESP).


\bibliographystyle{amsalpha}

\begin{thebibliography}{amsalpha}

\bibitem[AlA03]{AlA03} J. F. Alves and V. Ara\'{u}jo, {\it Random
    perturbations of nonuniformly expanding maps}, Ast\'{e}risque {\bf
    286}, (2003), 25-62.


\bibitem[AAV07]{AAV07} J. F. Alves, V. Ara\'{u}jo, and C. H. V\'{a}squez,
  {\it Stochastic stability of non-uniformly hyperbolic
    diffeomorphisms}, Stoch. Dyn. {\bf 7} (2007), 299-333.


\bibitem[Ara00]{Ara00}
V. Ara\'{u}jo, \emph{Attractors and time averages for random maps.}, Annales de l'Institut Henri Poincar\'{e} Ð Analyse
Non Lin\'{e}aire \textbf{17} (2000), no.~3, 307--369.

\bibitem[BeV06]{BeV06}
M. Benedicks and M. Viana, \emph{Random perturbations and statistical properties of H\'{e}non-like maps.}, Annales de l'Institut Henri Poincar\'{e} Ð Analyse
Non Lin\'{e}aire \textbf{23} (2006), no.~5, 713--752.

\bibitem[BlC70]{BlC70}
R. M. Blumenthal and H. H. Corson, \emph{On continuous collections of measures.}, Annales de l'Institut Fourier  \textbf{20} (1970), no.~2, 193--199.

\bibitem[BlC72]{BlC72}
R. M. Blumenthal and H. H. Corson, \emph{On continuous collections of measures.}, Proceedings of the Sixth Berkely Symposium on Mathematical Statistics and Probability, Vol II.: Probability Theory, (1972) 33--40

\bibitem[BDV05]{BDV05} C. Bonatti, L. J. D\'{i}az, and M. Viana,
  \emph{Dynamics beyond uniform hyperbolicity: a global geometric and
    probabilistic perspective}, Encyclopedia of {M}athematical
  {S}ciences, vol 102, Springer, Berlin, 2005.

\bibitem[CoL55]{CoL55} E. A. Coddington and N. Levinson, \emph{Theory of ordinary differential equations}, McGraw-Hill New York-Toronto-London, 1955



\bibitem[Fri08]{Fri08}
A. Friedman, \emph{Partial Differential Equations}, Dover Books on Mathematics Series. Dover Publications, 2008.


\bibitem[JKR15]{JKR15} J. Jost, M. Kell, and C. S. Rodrigues,
  \emph{Representation of Markov chains by random maps: existence and
    regularity conditions}, Calculus of Variations and Partial
  Differential Equations \textbf{54} (2015), 2637-2655.


\bibitem[Kif86]{Kif86}
Yu. Kifer, \emph{Ergodic theory of random transformations.}, Birkh\"{a}user, Boston, 1986.

\bibitem[Kif88]{Kif88}
Yu. Kifer, \emph{Random perturbations of dynamical systems.}, Birkh\"{a}user, Boston, 1988.


\bibitem[Mos65]{Mos65}
J. Moser, \emph{On the volume elements on a manifold}, Trans. Amer. Math. Soc., \textbf{120} (1965), 286--294.


\bibitem[Qua91]{Qua91}
A. N. Quas, \emph{On representation of {M}arkov chains by random smooth maps}, Bulletin of the {L}ondon {M}athematical {S}ociety \textbf{23} (1991), no.~5, 487--492.

\bibitem[RoV16]{RoV16} C. S. Rodrigues and R. Var\~{a}o,
  \emph{Disintegration of invariant measures via representation}, In preparation.


\bibitem[Vil09]{Vil09}
C. Villani, \emph{Optimal transport: old and new}, Grundlehren der Mathematischen Wissenschaften, vol. 338, Springer, Berlin, 2009.

\bibitem[You86]{You86} L.-S. Young, {\it Stochastic stability of hyperbolic
    attractors}, Erg. Th. \& Dyn. Sys. {\bf 6} (1986), 311-319.




\end{thebibliography}

\end{document}